\numberwithin{equation}{section}
\definecolor{light-gray}{gray}{0.95}
\newtheorem{theorem}{Theorem}[section]
\newtheorem{lemma}[theorem]{Lemma}
\newtheorem{proposition}[theorem]{Proposition}
\newtheorem{corollary}[theorem]{Corollary}
\newtheorem{definition}{Definition}
\newcommand{\mc}[1]{{\mathcal #1}}
\newcommand{\bb}[1]{{\mathbb #1}}
\newcommand{\eps}{\varepsilon}
\newcommand{\kk}{{\bf k}_{\textrm{\rm B}}}
\newcommand{\pfrac}[2]{\genfrac{}{}{}{1}{#1}{#2}}
\let\oldtocsection=\tocsection
\let\oldtocsubsection=\tocsubsection
\let\oldtocsubsubsection=\tocsubsubsection
\renewcommand{\tocsection}[2]{\hspace{0em}\oldtocsection{#1}{#2}}
\renewcommand{\tocsubsection}[2]{\hspace{1em}\oldtocsubsection{#1}{#2}}
\renewcommand{\tocsubsubsection}[2]{\hspace{2em}\oldtocsubsubsection{#1}{#2}}
\DeclareRobustCommand{\SkipTocEntry}[5]{}
\keywords{Harmonic oscillators, weak convergence, heat flow}
\begin{document}

\title[KMP model with  slow/fast boundaries]{Phase transition in the KMP model\\ with  slow/fast boundaries}

\author{Tertuliano Franco}
\address{UFBA\\
 Instituto de Matem\'atica, Campus de Ondina, Av. Adhemar de Barros, S/N. CEP 40170-110\\
Salvador, Brazil}
\curraddr{}
\email{tertu@ufba.br}
\thanks{}

\subjclass[2010]{60K35}

\begin{abstract} The Kipnis-Marchioro-Presutti (KMP) is a known model consisting on a one-dimensional chain of mechanically uncoupled oscillators, whose interactions occur via independent Poisson clocks: when a Poisson clock rings, the total energy at two neighbors is   redistributed uniformly at random between them. Moreover, at the boundaries, energy is exchanged with reservoirs of fixed temperatures. We study here a generalization of the KMP model by considering different rates at energy is exchanged with the reservoirs, and we then prove the existence of a  phase transition for the heat flow.

\end{abstract}

\maketitle


\section{Introduction}\label{s1}
How microscopic interactions determine the macroscopic behavior of a giv-en system is a question that guides a vast research in Statistical Mechanics and Probability. In this context, since the seventies, a rigorous mathematical theory has been developed in order to give a precise sense to the limit from microscopical systems with stochastic time evolution towards its macroscopic point of view.
As a classical reference in the subject we cite the book \cite{KipnScal1999}.

A particular important microscopic system  is the Kipnis-Marchioro-Presutti (KMP) model, see \cite{kmp}. Such model consists of a one-dimensional finite chain of oscillators, being each oscillator described by its velocity and position. The oscillators interact in the following way. Associated to each pair of neighbors there is a Poisson clock; when a certain clock rings, the total energy at the pair of neighbors is redistributed between them uniformly at random. The respective new positions and velocities are then chosen uniformly (according to the Lebesgue measure) among all the possible configurations on its surfaces of constant energy. Besides, at the right (resp. left) boundary, also at arrival times of a Poisson clock, the energy is replaced according to an exponential distribution of parameter $\beta_+>0$ (resp. $\beta_->0$). This is equivalent to say that the system is in contact with reservoirs of temperature $T_\pm=1/(\kk \beta_\pm)$, where $\kk$ stands for the Boltzmann constant. 

As explained in \cite{kmp}, at the invariant state, conditionally on the energy, position and velocity are uniformly distributed. We therefore restrict our attention here only to the energy profile.

Supposing $T_+\neq T_-$, a flux of energy is observed in KMP model. This is the content of \cite{kmp}, i.e., a rigorous proof of the Fourier Law.
Due to its peculiar structure, which gives rise to an interesting duality and consequent manageability, the KMP model is an interesting object of study both in Probability and Statistical Mechanics. See for instance  \cite{BGL2005,GKRV} and references therein.

Recently, several works have investigated how a slowed defect can utterly modify the scaling limit of a given microscopic system. See, for instance, \cite{fgn1,fgn2,fgn3,fgschutz2015,fgsimon2015}. By a slowed defect we mean that some specific site (or bond, or boundary), is rescaled differently from the rest of the system. This is precisely what we investigate in this paper. Rescaling  rates at which energy is exchanged with the reservoirs, we arrive at a phase transition for the steady state. In fact, taking rates as  $AL^{-a}$ and $BL^{-b}$, where  $L$ is the scaling parameter, the invariant profile of temperatures disconnects from the reservoirs if some of the parameters $a$ or $b$ is equal to one. This result, which is the so-called \textit{local equilibrium} (see \cite{kl}) is stated in the Theorem~\ref{thm21}, where explicit formulas for the limiting profiles are also provided.

According to the seminal paper \cite{kmp}, the KMP model has the striking property that its dual process of particles remains invariant (in some sense) when particles are added to the system. This is a fundamental ingredient in order to obtain the steady temperature profile. Since the original proof of this fact given in \cite[Proposition 3.1]{kmp} is somewhat unclear, we present here a simple, easily comprehensible proof of this fact in the Proposition \ref{prop33}, based on a  combinatorial identity. Our proof suits for the slow/fast boundaries case, covering the original model \cite{kmp} as a particular case.

Since the KMP model is non-gradient (see \cite{KipnScal1999} for a precise definition), the hydrodynamic limit of the model presented here turns to be a challenging problem. In view of Theorem \ref{thm21}, we conjecture that its hydrodynamic behavior should be described by a non-linear heat equation with  boundary conditions related to the regimes described in Theorem \ref{thm21}.

The outline of the paper is: Section \ref{s2} presents statements. Section \ref{s3} deals with the duality of KMP process with slow/fast boundaries. In Section \ref{s4}, the Label Process is defined,  which allows explicit computations, eventually leading to the proof of Theorem \ref{thm21}. In Section \ref{sec5}, further extensions and open problems are considered.

\section{Statement of results}\label{s2}
\textbf{Notations:} The cardinality of a finite set $A$ will be denoted by $|A|$. We clarify that here $\bb N=\{0,1,2,\ldots\}$ and $\bb R_+=\{x\in \bb R\,;\, x>0\}$.\medskip
 
For  a positive integer $L$, consider the state space $\Omega_L=\bb R_+^{(2L+1)}$, which represents the set of energy configurations of $2L+1$ oscillators in a one-dimensional chain. We denote an energy configuration by 
\begin{equation}\label{eq21}
\xi\;=\; (\xi_{-L},\ldots,\xi_{L})\in \Omega_L\,.
\end{equation}
 The \textit{KMP model  with slow/fast boundaries} we define here is the  Markov process $\{\xi_t\,;\, t\geq 0\}$  on $\Omega_L$  characterized by its generator $G_L$ acting on smooth bounded functions $f:\Omega_L\to\bb R$ as
\begin{equation*}
\begin{split}
 \big(G_L f\big)(\xi)\!= &\!\! \sum_{x=-L}^{L} \int_0^1\!  \Big[ f(\xi_{-L},\ldots, p(\xi_x+\xi_{x+1}), (1-p)(\xi_x+\xi_{x+1}),\ldots,\xi_L) - f(\xi)\Big]dp\\
 & + \frac{A}{L^a} \int_0^\infty \Big[ f(y,\xi_{-L+1},\ldots,\xi_L) - f(\xi)  \Big] \beta_- e^{-\beta_-y}\,dy\\
  & + \frac{B}{L^b}\int_0^\infty \Big[ f(\xi_{-L},\ldots,\xi_{L-1}, y) - f(\xi)  \Big] \beta_{+} e^{-\beta_+y}\,dy\,,
\end{split}
\end{equation*}
where $\beta_+,\beta_-,A,B>0$ and $a,b\in \bb R$.  We define $T_\pm$, the temperatures in the left and right reservoirs, respectively, by the equalities
$$\beta_{\pm}\;=\;\frac{1}{{\kk}T_{\pm}}\,,$$
where $\kk$ stands for the Boltzmann constant. Some authors assume $\kk=1$ for simplicity.
\begin{definition}
For given $T>0$, let $\nu_T$ be the Gibbs measure (for the energy) of independent oscillators on $\bb Z$. In other words, $\nu_T$ is the following product measure on $\bb R^{\bb Z}_+$: 
\[
d\nu_T\;=\;\prod_{x\in \bb Z} \Big[\frac{1}{{\kk}T}\exp\Big\{-\frac{\xi_x}{{\kk}T}\Big\}\,d\xi_x\Big]\,.
\]
\end{definition}
Notice that the marginal of the measure $\nu_T$ at any site is an exponential distribution of parameter $1/\kk T$.
\begin{definition} Let us divide $\bb R^{\bb Z}_+$ in blocks of size $(2L+1)$, where one of the blocks is centered at the origin. In other words, we write $\bb R^{\bb Z}_+$ as the following Cartesian product:
\[
\bb R^{\bb Z}_+\;=\; \prod_{j \in \bb Z} \bb R^{\{-L,\ldots,0,\ldots, L\}\,+\,j (2L+1)}_+\,.
\]  
Let $\mu_L$ be the (unique) invariant measure of the KMP process with slow/fast reservoirs. Denote by $\tilde{\mu}_L$ its extension to $\bb R^{\bb Z}$ obtained by taking the product of copies of $\mu_L$ on each of the blocks above of size $(2L+1)$.
\end{definition}
The particular choice for the extension of $\mu_L$ has no relevance here and any other extension would suit our purpose as well. We have constructed $\tilde{\mu}_L$  only to give sense to  the statement below.
\begin{theorem}\label{thm21}
Denote by $\mu_L$ the invariant measure of the KMP process with slow/fast boundaries, and let $\tau_{\lfloor uL \rfloor}$ be the shift of $\lfloor uL \rfloor$, with $u\in(-1,1)$.

 Then, as $L\to\infty$, the probability measure $\tau_{\lfloor uL \rfloor}\tilde{\mu}_L$ converges weakly to $\nu_T$,  where:
 \begin{enumerate}[\bf{(}\bf i\bf{)}]
 \item If $a,b<1$, 
 \[
 T\;=\;T(u)=\Big(\frac{1-u}{2}\Big)T_-+\Big(\frac{1+u}{2}\Big)T_+.
 \]
 \item If $a=1$ and $b<1$, 
 \[
 T(u)=\Big(\frac{A(1-u)}{2A+1}\Big)T_-+\Big(\frac{A(1+u)+1}{2A+1}\Big)T_+\,.
 \]
 \item If $a<1$ and $b=1$, 
 \[
 T(u)=\Big(\frac{B(1-u)+1}{2B+1}\Big)T_-+\Big(\frac{B(1+u)}{2B+1}\Big)T_+\,.
 \]
 \item If $a=b=1$,
 \[
 T(u)=\Big(\frac{AB(1-u)+A}{2AB+A+B}\Big)T_-+\Big(\frac{AB(1+u)+B}{2AB+A+B}\Big)T_+\,.
 \]
  \item If $a=b>1$, 
 \[
 T(u)= \Big( \frac{A}{A+B}\Big)T_-+\Big(\frac{B}{A+B}\Big)T_+\,.
 \]
 \item If $a>\max\{1,b\}$,
  \[
 T(u)= T_+\,.
 \]
  \item If $b>\max\{1,a\}$,
  \[
 T(u)= T_-\,.
 \]
 \end{enumerate}
\end{theorem}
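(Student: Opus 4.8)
The plan is to reduce the asserted weak convergence to the convergence of all finite-dimensional mixed moments, and then to evaluate those moments via the duality of Section~\ref{s3}. Since $\nu_T$ is a product of exponential laws---each determined by its moments and satisfying Carleman's condition---and since the correlation estimates below will also provide bounds $\bb E_{\mu_L}[\xi_x^{\,n}]\le C_n$ uniform in $L$ and $x$, the weak convergence of $\tau_{\lfloor uL\rfloor}\tilde\mu_L$ to $\nu_{T(u)}$ is equivalent to the statement that, for every fixed $z_1<\cdots<z_k$ in $\bb Z$ and all $m_1,\dots,m_k\in\bb N$,
\begin{equation*}
\lim_{L\to\infty}\bb E_{\mu_L}\big[\,\xi_{\lfloor uL\rfloor+z_1}^{\,m_1}\cdots\xi_{\lfloor uL\rfloor+z_k}^{\,m_k}\,\big]\;=\;\prod_{j=1}^{k}m_j!\,\big(\kk\,T(u)\big)^{m_j}\,,
\end{equation*}
the right-hand side being the corresponding moment of $\nu_{T(u)}$; here one uses that, for $L$ large, the sites $\lfloor uL\rfloor+z_j$ lie in the central block on which $\tilde\mu_L$ coincides with $\mu_L$. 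It remains to compute these moments.

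The one-point function already exhibits the phase transition. Applying $G_L$ to $f(\xi)=\xi_x$ and using $\int_0^1 p\,dp=\tfrac12$, one sees that $e_L(x):=\bb E_{\mu_L}[\xi_x]$ is discrete harmonic in the bulk, hence affine, $e_L(x)=\alpha_L x+\gamma_L$, while stationarity at the endpoints gives Robin-type relations linking the boundary increments $e_L(-L+1)-e_L(-L)$ and $e_L(L-1)-e_L(L)$ to $\tfrac{A}{L^a}(e_L(-L)-\kk T_-)$ and $\tfrac{B}{L^b}(e_L(L)-\kk T_+)$ (up to the constant coming from the bulk rate). Passing to the limit, the powers $L^{1-a}$ and $L^{1-b}$ that appear once denominators are cleared decide which condition survives at each end: a Dirichlet condition $e_\infty=\kk T_\pm$ if the exponent is $<1$, a genuine Robin condition with finite coefficient if it equals $1$, and a no-flux condition if it exceeds $1$---in the last case the surviving reservoir pins the now constant profile, or, if both fluxes vanish, the profile relaxes to the balanced average of $T_-$ and $T_+$. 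Solving the resulting constrained linear system in the seven combinations of $(a,b)$---including the degenerate cases $a=b>1$, $a>\max\{1,b\}$ and $b>\max\{1,a\}$---yields exactly the affine profiles $T(u)$ of the statement. Equivalently, this is the potential theory of a single dual random walk on $\{-L,\dots,L\}$ killed at the endpoints at rates $AL^{-a}$ and $BL^{-b}$, whose left/right absorption probabilities $q_-(x),q_+(x)$ give $e_L(x)=\kk\,(q_-(x)T_-+q_+(x)T_+)$.

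For a general monomial I would use Proposition~\ref{prop33}, which yields $\bb E_{\mu_L}\big[\prod_j\xi_{x_j}^{\,m_j}\big]=\big(\prod_j m_j!\big)\,\bb E^{\mathrm{dual}}\big[(\kk T_-)^{N_L}(\kk T_+)^{N_R}\big]$, where the dual process is a finite system of (possibly stacked) particles performing symmetric nearest-neighbour random walks on $\{-L,\dots,L\}$ and being absorbed into the left/right reservoirs at the endpoints at rates $AL^{-a}$ and $BL^{-b}$---the stacked-particle transitions governed by the combinatorial identity behind Proposition~\ref{prop33}---, started with $m_j$ particles at each $x_j$, and $N_L,N_R$ count the particles eventually absorbed on the left and on the right, $N_L+N_R=N:=\sum_j m_j$. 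Following the particles by means of the Label Process of Section~\ref{s4} allows one to evaluate $\bb E^{\mathrm{dual}}[(\kk T_-)^{N_L}(\kk T_+)^{N_R}]$; the point to be extracted is that, as $L\to\infty$, the contribution of the stacked-particle interactions is $o(1)$ (this is the local equilibrium property), so that the particles behave asymptotically as independent walkers, each absorbed on the left, resp. right, with the single-particle limiting probabilities $q_\pm$ of the previous step. Hence $\bb E^{\mathrm{dual}}[(\kk T_-)^{N_L}(\kk T_+)^{N_R}]\to\big(\kk(q_-T_-+q_+T_+)\big)^{N}=(\kk T(u))^{N}$, which gives the displayed limit; the same estimates supply the uniform moment bounds used above.

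The main obstacle is twofold. First, the asymptotic decoupling of the dual particles must be proved uniformly across the regimes: near a slow endpoint (exponent equal to or larger than $1$) a particle may linger for a time of order $L^{a}$, resp. $L^{b}$, before absorption, so one needs quantitative control of the expected local time at the endpoints of the killed walk---the very quantity that fixes the boundary regime in the one-point analysis---sharp enough to beat the contribution of all stacked-particle events. Second, one must organize the seven regimes coherently, since the limiting absorption probabilities $q_\pm$, and hence $T(u)$, depend discontinuously on whether $a$ and $b$ cross the threshold $1$; the degenerate cases in which one or both reservoirs decouple and the profile becomes constant require a separate passage to the limit, with the error terms from the decoupling kept under control throughout.
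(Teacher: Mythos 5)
Your overall skeleton is the paper's: reduce weak convergence to convergence of mixed moments (the paper phrases this as polynomials in finitely many variables being a convergence-determining class), express those moments through the dual absorbed-walker system, and read off the seven regimes from the explicit single-particle absorption probability. That last computation is exactly Proposition \ref{prop44}, whose finite-$L$ formula $\frac{AB(x+L)+BL^a}{2ABL+AL^b+BL^a}$ produces all seven limits at once (in particular, when $a=b>1$ the limit is the $A\!:\!B$ weighted average $\frac{A}{A+B}T_-+\frac{B}{A+B}T_+$, not a ``balanced'' arithmetic mean, so your heuristic Dirichlet/Robin/no-flux narrative has to be replaced by the limit of the explicit formula in the degenerate cases, as you partly acknowledge). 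One attribution error: the identity $\bb E_{\mu_L}\big[\prod_j\xi_{x_j}^{m_j}\big]=\big(\prod_j m_j!\big)\,\bb E^{\mathrm{dual}}\big[(\kk T_-)^{N_L}(\kk T_+)^{N_R}\big]$ is Corollary \ref{cor32}, i.e.\ the $t\to\infty$ consequence of the duality relation \eqref{eq33}; Proposition \ref{prop33} is the consistency property of the Label Process and does not by itself yield this identity.

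The genuine gap is the factorization step, which you flag as the ``main obstacle'' but do not prove: that $\bb E^{\mathrm{dual}}\big[(\kk T_-)^{N_L}(\kk T_+)^{N_R}\big]\to(\kk T(u))^{N}$, i.e.\ that the $N$ dual walkers become asymptotically independent with common absorption probability $p(u)$. This is precisely Proposition \ref{prop45}, on which the paper's proof of Theorem \ref{thm21} rests, and the paper establishes it without any of the quantitative boundary local-time estimates you envisage: the consistency property (Proposition \ref{prop33}) turns the hitting probabilities $p_L$ into a genuine probability measure on $\{-1,+1\}^{\{-L,\ldots,L\}}$; asymptotic exchangeability (Proposition \ref{exchangeable}) plus de Finetti's theorem identify every subsequential limit as a mixture of Bernoulli product measures; and the two-particle coalescence argument adapted from \cite{galves1981}, combined with the strict convexity of $p\mapsto p^2$, forces the mixing measure to be a Dirac mass, hence a Bernoulli product with parameter $p(u)$ from Proposition \ref{prop44}. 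Note that the boundary mechanism plays no role in the decoupling, since absorption at $\delta(\pm)$ does not perturb trajectories before they reach the endpoints; so no regime-uniform control of the time spent near a slow endpoint is needed. As written, your proposal asserts the decoupling and sketches a harder route to it without carrying it out, so it does not yet constitute a proof of the theorem; either invoke Proposition \ref{prop45} or supply the exchangeability/de Finetti argument (or a genuine coupling estimate) in its place.
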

Some remarks: The regime  \textbf{(i)} includes the seminal result of \cite{kmp}, which corresponds to the case $a=b=0$  and $A=B=1$. 
In the regimes \textbf{(ii)}, \textbf{(iii)} and \textbf{(iv)}, the temperature varies linearly for $u\in (-1,1)$, but does not interpolate $T_-$ and $T_+$. In fact, when some of the parameters $a$ and $b$ is equal to one, the temperature close to the boundary does not reach the temperature $T_\pm$  of the corresponding reservoir.  In the regimes \textbf{(v)}, \textbf{(vi)} and \textbf{(vii)}, the temperature on the chain of oscillators is completely homogenized. See the Figure \ref{fig1} for an illustration of the regime as a function of the parameters  $a$ and $b$.

\begin{figure}[H]
\centering
\begin{tikzpicture}

\begin{scope}[xshift=2.5cm]
\fill[fill=black!15!white] (-1,-1)--(1,-1)--(1,1)--(-1,1)--cycle;
\fill[pattern=dots] (1,-1.05)--(2.55,-1.05)--(2.55,2.55)--(1,1)--cycle;
\fill[pattern=north west lines] (-1,1)--(1,1)--(2.5,2.5)--(-1,2.5)--cycle;
\draw[ultra thick, gray] (1,1)--(2.6,2.6);
\draw[ultra thick] (1,1)--(1,-1.2);
\draw[ultra thick, dash pattern=on 3.5pt off 1.5pt] (-1.2,1)--(1,1);
\filldraw[fill=white, draw=black, thick] (1,1) circle (2pt);

\draw[->] (-1.2,0)--(2.8,0) node[below]{$a$};
\draw[->] (0,-1.2)--(0,2.8) node[left]{$b$};
\draw (-0.15,1) node[below]{$1$};
\draw (0.85,0) node[below]{$1$};
\end{scope}

\begin{scope}[yshift=-1.5cm]


\fill[black!15!white] (0,0) rectangle (1,-0.5); 
\draw (1,-0.25) node[right]{Regime \textbf{(i)}};

\draw[ultra thick, dash pattern=on 3.5pt off 1.5pt] (0,-1)--(1,-1);
\draw  (1 ,-1) node[right]{Regime \textbf{(ii)}};

\draw[ultra thick] (0,-1.75)--(1,-1.75);
\draw (1,-1.75) node[right]{Regime \textbf{(iii)}};

\filldraw[fill=white, draw=black, thick] (0.5,-2.5) circle (2pt) ;
\draw (1,-2.5) node[right]{Regime \textbf{(iv)}};

\draw[ultra thick,color=gray] (4,-0.25)--(5,-0.25);
\draw (5,-0.25) node[right, color=black]{Regime \textbf{(v)}};

\fill[pattern=dots] (4,-0.75) rectangle (5,-1.25); 
\draw (5,-1) node[right]{Regime \textbf{(vi)}};

\fill[pattern=north west lines] (4,-1.5) rectangle (5,-2); 
\draw (5,-1.75) node[right]{Regime \textbf{(vii)}};
\end{scope}
\end{tikzpicture}
\vspace{0.5cm}
\caption{Regimes for the heat flow}
\label{fig1}
\end{figure}
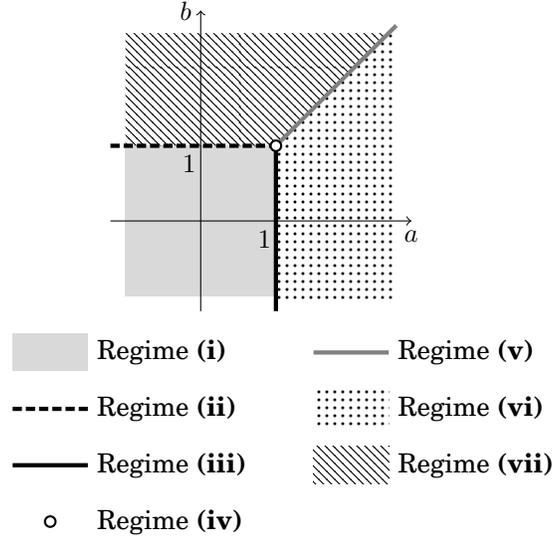

\section{Dual process of walkers}\label{s3}
 We construct in this section a discrete system of walkers which is dual (in a sense to be defined)  to the KMP process with slow/fast boundaries. We adapt here ideas  from \cite{kmp}. 

\begin{definition}\label{def3}
Let $\Lambda_L= \{-L,\ldots,L\}\cup \{ \,\delta(-)\,,\, \delta(+)\,\}$ and denote
\begin{equation}\label{eq31}
n=(n_{\delta(-)},n_{-L},\ldots,n_L,n_{\delta(+)})\in \bb N^{\Lambda_L}
\end{equation}
 Consider the Markov process taking values on $\bb N^{\Lambda_L}$ characterized by the following generator 
\begin{equation*}
\begin{split}
&\big(A_Lf\big)(n)= \frac{A}{L^a}\Big[f(n_{\delta(-)}+n_{-L},0,n_{-L+1},\ldots,n_{\delta(+)})-f(n)\Big]\\
& +\frac{B}{L^b}\Big[f(n_{\delta(-)},\ldots,n_{L-1},0,n_{\delta(+)}+n_L)-f(n)\Big]\\
&
 +\!\sum_{j=-L}^L\!\! \pfrac{1}{n_j+n_{j+1}+1}\!\!\!\!\!\sum_{q=0}^{n_j+n_{j+1}}\!\!\Big[
f(n_{\delta(-)},n_{-L},\ldots,n_{i-1},q,n_i+n_{i+1}-q,\ldots,n_{\delta(+)})-f(n)\Big].\\
\end{split}
\end{equation*}
\end{definition}
This particle system can be described in words as follows. We associate to each pair of neighbour sites  a Poisson clock of parameter one. When a Poisson clock rings, the particles in the corresponding sites are uniformly redistributed. Moreover, associated to the site $-L$ there is a Poisson clock of parameter $A/L^a$. When this Poisson clock rings, all the particles at the site $-L$  move to the site $\delta(-)$ and then stay there forever. Analogous description for the site $L$. Moreover, all the Poisson clocks are taken as independent.

Recall \eqref{eq21} and \eqref{eq31}. 
Let us define 
\begin{equation}\label{dual}
F(n,\xi)=\frac{1}{\beta_+^{n_{\delta(+)}}\beta_-^{n_{\delta(-)}}}\prod_{x=-L}^L\frac{\xi_x^{n_x}}{n_x!}\,.
\end{equation}
\begin{theorem}[Duality]
Fix $\zeta\in \Omega_L$ and
\begin{equation}\label{eq32}
k=(0,k_{-L},\ldots,k_L,0)\in \bb N^{\Lambda_L}\,.
\end{equation}
Denote by $\bb E_k$ the expectation induced by the Markov process of generator $A_L$ starting at the configuration $k$ and denote by ${\bf E}_{\zeta}$ the expectation induced by the Markov process of generator $G_L$ starting at the initial configuration $\zeta$. Then, for all $t\geq 0$,
\begin{equation}\label{eq33}
{\bf E}_\zeta \Big[ F(k,\xi_t)\Big]\;=\;  
\bb E_k \Big[ F(\eta_t,\zeta)\Big].
\end{equation}
\end{theorem}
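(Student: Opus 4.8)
The plan is to reduce the identity \eqref{eq33} to an infinitesimal duality at the level of generators and then invoke the standard fact that generator duality propagates to the corresponding semigroups. Writing $F_n(\xi):=F(n,\xi)$ and $F^\zeta(\cdot):=F(\cdot,\zeta)$, the first and main step is to show that for \emph{every} $n\in\bb N^{\Lambda_L}$ and every $\zeta\in\Omega_L$,
\begin{equation}\label{gendual}
\big(G_L F_n\big)(\zeta)\;=\;\big(A_L F^\zeta\big)(n)\,.
\end{equation}
Note that the restriction $k_{\delta(-)}=k_{\delta(+)}=0$ in \eqref{eq32} is irrelevant at this stage: it only pins down the initial condition, whereas \eqref{gendual} must be checked on all configurations, including those carrying mass at $\delta(\pm)$, which the walker dynamics produces.

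I would verify \eqref{gendual} term by term, matching the three groups of terms of $G_L$ with the three groups of $A_L$, the reservoir rates $A/L^a$ and $B/L^b$ included. For a bulk bond $\{x,x+1\}$ set $m=n_x+n_{x+1}$ and $s=\zeta_x+\zeta_{x+1}$; all other factors of $F$ are untouched by both bulk moves, and the two bond factors transform, on the $G_L$ side, as
\begin{equation*}
\int_0^1 \frac{(ps)^{n_x}}{n_x!}\,\frac{((1-p)s)^{n_{x+1}}}{n_{x+1}!}\,dp
\;=\;\frac{s^{m}}{n_x!\,n_{x+1}!}\int_0^1\! p^{n_x}(1-p)^{n_{x+1}}\,dp
\;=\;\frac{s^{m}}{(m+1)!}\,,
\end{equation*}
and, on the $A_L$ side, as
\begin{equation*}
\frac{1}{m+1}\sum_{q=0}^{m}\frac{\zeta_x^{\,q}}{q!}\,\frac{\zeta_{x+1}^{\,m-q}}{(m-q)!}
\;=\;\frac{1}{(m+1)\,m!}\sum_{q=0}^m\binom{m}{q}\zeta_x^{\,q}\zeta_{x+1}^{\,m-q}
\;=\;\frac{s^{m}}{(m+1)!}\,,
\end{equation*}
so the two contributions coincide; this is the combinatorial identity announced in the introduction. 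For the left reservoir the only relevant computation is $\int_0^\infty \frac{y^{m}}{m!}\,\beta_- e^{-\beta_- y}\,dy=\beta_-^{-m}$ with $m=n_{-L}$: averaging $F_n$ over the injected energy replaces $\zeta_{-L}^{\,n_{-L}}/n_{-L}!$ by $\beta_-^{-n_{-L}}$, which is exactly the effect on $F^\zeta$ of moving the $n_{-L}$ walkers from $-L$ to $\delta(-)$, turning the prefactor $\beta_-^{-n_{\delta(-)}}$ into $\beta_-^{-(n_{\delta(-)}+n_{-L})}$ and the $-L$ factor into $1$. The right reservoir is handled identically with $\beta_+$, $n_L$, $\delta(+)$. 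This establishes \eqref{gendual}.

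With \eqref{gendual} in hand, \eqref{eq33} follows by the usual interpolation: realize the energy process $\{\xi_t\}$ and the walker process $\{\eta_t\}$ as \emph{independent} on a common probability space, fix $t\geq0$, and set $\Phi(s)={\bf E}_\zeta\,\bb E_k\big[F(\eta_{t-s},\xi_s)\big]$ for $s\in[0,t]$. Differentiating in $s$ and using that $G_L$ and $A_L$ generate the respective semigroups, together with \eqref{gendual} applied pointwise at $(\xi_s,\eta_{t-s})$, gives $\Phi'(s)\equiv0$; hence $\Phi(t)={\bf E}_\zeta[F(k,\xi_t)]$ equals $\Phi(0)=\bb E_k[F(\eta_t,\zeta)]$. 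The one genuinely non-algebraic point, and the step I expect to need the most care, is justifying this last computation, since $F$ grows polynomially in the energy variable while $\Omega_L$ is noncompact. This is controlled by two observations: the walker process conserves the total number of particles $\sum_{z\in\Lambda_L}n_z$ (every move merely relocates walkers), so the polynomial degree of $F(\eta_{t-s},\cdot)$ never exceeds $|k|$ and $F(\eta_t,\zeta)$ is bounded for fixed $\zeta$; and the energy process has all moments bounded uniformly on $[0,t]$, because a bulk move preserves the energy of a bond while a reservoir move injects an $\mathrm{Exp}(\beta_\pm)$ amount, whence $\sup_{s\le t}{\bf E}_\zeta[(\xi_s)_x^{N}]<\infty$ for every $N$. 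These bounds make $F(\eta_{t-s},\xi_s)$ uniformly integrable on $s\in[0,t]$, place $F_n$ and $F^\zeta$ in the domains of $G_L$ and $A_L$, and legitimize differentiating under the expectation, completing the argument.
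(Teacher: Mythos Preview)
Your proof is correct and follows exactly the approach the paper indicates: the paper's entire argument is the one-line remark that the proof ``consists on checking that $A_LF=G_LF$'' and a citation to \cite{kmp}, leaving the verification to the reader, whereas you actually carry out that verification (bulk via the Beta integral/binomial theorem, boundaries via the exponential moments) and additionally spell out the interpolation $\Phi(s)={\bf E}_\zeta\bb E_k[F(\eta_{t-s},\xi_s)]$ together with the moment bounds that justify it. One minor quibble: the identity you use for the bulk bonds is the Beta integral paired with the binomial theorem, not the combinatorial identity of Lemma~\ref{lemma34} alluded to in the introduction, which serves a different purpose (Proposition~\ref{prop33}).
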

The proof of above is very similar to the one in \cite[Thm 2.1]{kmp}, and consists on checking that $A_LF=G_LF$. We left this to the reader. By letting $t\to\infty$ in \eqref{eq33} we obtain:
\begin{corollary}\label{cor32}
Given $k$ as in \eqref{eq31}, denote $\Vert k \Vert=\sum_{x=-L}^L k_x$. For $0\leq j\leq \Vert k \Vert$ let
\begin{equation*}
q_L(k,j)\;=\; {\bb P}_k\Big[\big\{ j \textrm{ particles hit } \delta(+) \textrm{ and } \Vert k \Vert-j \textrm{ particles hit } \delta(-)\big\}\Big].
\end{equation*}
Then
\begin{equation*}
\int F(k,\xi)\,\mu_L(d\xi)\;=\; \sum_{j=0}^{\Vert k \Vert}
\frac{1}{\beta_+^j\beta_-^{\Vert k \Vert-j}}\,q_L(k,j)\,.
\end{equation*}
\end{corollary}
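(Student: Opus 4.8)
The plan is to run the duality identity \eqref{eq33} from the stationary distribution and let $t\to\infty$. First I would integrate both sides of \eqref{eq33} against $\mu_L(d\zeta)$. On the left, since $\mu_L$ is invariant for $G_L$, the process started from $\mu_L$ stays distributed as $\mu_L$, so $\int {\bf E}_\zeta[F(k,\xi_t)]\,\mu_L(d\zeta)=\int F(k,\xi)\,\mu_L(d\xi)$ for every $t\ge 0$; on the right, Fubini turns $\int \bb E_k[F(\eta_t,\zeta)]\,\mu_L(d\zeta)$ into $\bb E_k\big[\int F(\eta_t,\zeta)\,\mu_L(d\zeta)\big]$. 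So everything reduces to computing $\lim_{t\to\infty}\bb E_k\big[\int F(\eta_t,\zeta)\,\mu_L(d\zeta)\big]$.

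Next I would analyse the dual walk as $t\to\infty$. The key structural fact is that every transition of $A_L$---the bulk redistribution and the two absorption moves---preserves the total particle number $\sum_{x\in\Lambda_L}n_x$, which at time $0$ equals $\Vert k\Vert$ because $k$ has no particles at $\delta(\pm)$; hence $\eta_t$ lives in the \emph{finite} set $\{n\in\bb N^{\Lambda_L}:\sum_{x\in\Lambda_L}n_x=\Vert k\Vert\}$. Since $A/L^a>0$ and $B/L^b>0$, from any configuration there is a positive chance of moving every bulk particle to $\delta(\pm)$ in bounded time (push mass to $\pm L$ by redistributions, then fire the absorption clock), so on a finite state space every non-absorbing configuration is transient and $\eta_t$ is a.s.\ eventually absorbed at a configuration $\eta_\infty$ supported on $\{\delta(-),\delta(+)\}$, with $J$ particles at $\delta(+)$ and $\Vert k\Vert-J$ at $\delta(-)$, where $\bb P_k[J=j]=q_L(k,j)$ by definition. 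On such a configuration $F(\eta_\infty,\zeta)=\beta_+^{-J}\beta_-^{-(\Vert k\Vert-J)}$ is constant in $\zeta$, so $\int F(\eta_\infty,\zeta)\,\mu_L(d\zeta)=\beta_+^{-J}\beta_-^{-(\Vert k\Vert-J)}$ and averaging over $J$ yields exactly $\sum_{j=0}^{\Vert k\Vert}\beta_+^{-j}\beta_-^{-(\Vert k\Vert-j)}q_L(k,j)$.

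The hard part is justifying the passage to the limit inside the two integrations, and, relatedly, checking that $\int F(k,\cdot)\,d\mu_L$ is finite so that the stationarity step on the left is a genuine identity rather than one between infinities. For this I would first establish an a priori moment bound, $\int \prod_{x} \xi_x^{m_x}\,\mu_L(d\xi)<\infty$ for every exponent vector, most cleanly via a Foster--Lyapunov drift computation for $G_L$ with test functions $\sum_{x=-L}^{L}e^{c\xi_x}$ for small $c<\min\{\beta_+,\beta_-\}$, encoding that the exponential reservoir laws pin down the tails while the energy-conserving bulk moves cannot spoil integrability (one could also quote the analogous estimate from \cite{kmp}, to which this is entirely parallel). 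Granting this, conservation of particle number makes $F(\eta_t,\zeta)$, as a function of $\zeta$, a finite sum of monomials $\prod_x\zeta_x^{m_x}$ with $\sum_x m_x\le\Vert k\Vert$, bounded uniformly in $t$ by a fixed $\mu_L$-integrable polynomial; dominated convergence then lets me take $t\to\infty$ through both $\bb E_k$ and $\int\cdot\,\mu_L(d\zeta)$, completing the proof. I expect the moment bound to be the only step requiring real care; the remainder is bookkeeping.
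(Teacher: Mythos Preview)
Your argument is correct and is essentially what the paper does: its entire proof is the single clause ``By letting $t\to\infty$ in \eqref{eq33} we obtain,'' and you have simply supplied the bookkeeping. The one tactical difference is that the paper's literal reading fixes $\zeta$ and sends $t\to\infty$ on both sides of \eqref{eq33}, which on the left requires knowing that $\mathbf E_\zeta[F(k,\xi_t)]\to\int F(k,\xi)\,\mu_L(d\xi)$, i.e.\ ergodicity of the KMP chain for polynomial observables; by integrating against $\mu_L(d\zeta)$ first you replace that ergodicity statement with mere stationarity plus the moment bound, which is a cleaner set of hypotheses. Either way the polynomial-moment estimate you isolate is the only genuine analytic ingredient, and your Foster--Lyapunov sketch (or the analogous bound in \cite{kmp}) is adequate for it; note also that since $F\ge 0$ the Fubini step is just Tonelli and needs no separate justification.
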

\section{Label Process}\label{s4}
We will call by   \textit{Label Process} the Markov process constructed by labelling particles in the process presented in Definition \ref{def3} in the following way. First, we put a label to distinguish each particle. Consider a time when the Poisson clock associated to a pair of sites $k,k+1$ rings. In that moment, let us say that the total quantity of particles is $n_k+n_{k+1}$. 
Make a bijection between the set of labels of those particles and the set of integers $\{0,\ldots,n_k+n_{k+1}\}$. 
 Then, choose an integer $U$ uniformly between $0$ and $n_k+n_{k+1}$ and independently choose uniformly a permutation $\zeta$ of the integers $\{0,\ldots,n_k+n_{k+1}\}$. The particles corresponding to the first $U$ positions of the permutation $\zeta$ will be addressed to the site $k$ and the remaining to the site $k+1$. At the boundaries, the procedure is the following: when the Poisson clock associated to the site $L$ rings, all the particles in the site $L$ move to the site $\delta(+)$ and stay there forever. Analogous description for the site $-L$.
 
Notice that \textit{particles are not independent} in this dynamics. Moreover, by counting how many particles there is at each site we can recover the process given in the Definition \ref{def3}.

\begin{definition}
 Let $\bb P_{x_1,\ldots,x_n}^L$ be the probability induced by the Label Process starting from $n$ distinct particles located at the sites $x_1,\ldots,x_n\in \{-L,\ldots, L\}$.
Moreover, for $x_1,\ldots,x_n\in \{-L,\ldots, L\}$ and
$\eps_1,\ldots,\eps_n\in\{\,\delta(+)\,,\,\delta(-)\,\}$, denote
\begin{equation*}
\begin{split}
 p_L(x_1,\ldots,x_n;\eps_1,\ldots,\eps_n)
:= \bb P_{x_1,\ldots,x_n}^L\Big[\big\{\textrm{for }i=1,\ldots,n, &\textrm{ the particle starting}\\
& \textrm{at } x_i \textrm{ hits the site } \eps_i\big\}\Big]\,.\\
\end{split}
\end{equation*}
\end{definition}
From the definition above, we get
\begin{equation*}
q_L(k,j)\;=\; \sum p_L(x_1,\ldots,x_n;\eps_1,\ldots,\eps_n)\,,
\end{equation*}
where the sum above is taken over all sequences $\eps_1,\ldots,\eps_n$ such that the cardinality of the set $\{i\;;\; \eps_i=\delta(+)\}$ is equal to $j$. 

The next proposition tell us that if  some of the particles initially located  at $\{x_1,\ldots,x_n\}$ are removed (or some particles are added),  the behavior of the remaining particles is not modified. 
\begin{proposition}\label{prop33} Recall that $\bb P^L_{x_1,\ldots,x_n}$ denotes the probability induced by the Label Process starting from particles initially located at the sites $x_1\ldots,x_n\in\{-L,\ldots,L\}$.
Let $m<n$ and $\{i_1,\ldots,i_m\}\subset \{1,\ldots,n\}$. Define $y_k=x_{i_k}$. 
Then, the  probability  $\bb P^L_{x_1,\ldots,x_n}$ restricted to the class of events that depend only on the set of particles initially located at $\{x_{i_1},\ldots,x_{i_m}\}$ coincides with $\bb P^L_{y_1,\ldots,y_m}$.

In particular, for any $1\leq i\leq n$,
\begin{equation*}
\begin{split}
\sum_{\eps_i\in\{\delta(-),\delta(+)\}}&  p_L(x_1,\ldots,x_n;\eps_1,\ldots,\eps_n)\\
&\;=\;p_L(x_1,\ldots,x_{i-1},x_{i+1},\ldots,x_n;\eps_1,\ldots,\eps_{i-1},\eps_{i+1},\ldots,\eps_n)\,.\\
\end{split}
\end{equation*}
\end{proposition}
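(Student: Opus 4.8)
The plan is to prove the "restriction" statement by a coupling argument that exhibits the Label Process on $\{x_1,\dots,x_n\}$ and the Label Process on $\{y_1,\dots,y_m\}=\{x_{i_1},\dots,x_{i_m}\}$ on the same probability space, in such a way that the trajectories of the $m$ distinguished particles agree almost surely. The reduction from the general statement to the ``in particular'' claim is immediate (take $m=n-1$, drop the $i$-th particle, and sum out $\eps_i$), so the content is the coupling. The key observation, already flagged in the text, is that when a bond clock at $(k,k+1)$ rings, the new side of each present particle is determined by first choosing a uniform integer $U\in\{0,\dots,N\}$ (with $N$ the number of particles there) and then a uniform permutation $\zeta$ of $\{0,\dots,N\}$, sending the first $U$ labels to $k$ and the rest to $k+1$. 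One should rewrite this equivalently as: each particle independently picks a uniform $V\in[0,1]$ (a ``rank variable''), and, given the multiset of ranks of the $N+1$ present particles, the $U$ particles with the smallest ranks go to site $k$ and the rest to site $k+1$, where $U$ is independently uniform on $\{0,\dots,N\}$. Crucially, which particles go left versus right depends only on the \emph{relative order} of the rank variables of the particles actually present — it does not depend on the presence or absence of any other particle.

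The main step is to build the coupling from this order-based description. On a common space, attach to the full system the Poisson bond clocks and boundary clocks, a uniform rank variable for each of the $n$ particles at each clock ring it participates in, and, at each bond ring, an auxiliary variable used to produce the cut $U$. Run the full Label Process with these ingredients. Now run the restricted ($m$-particle) process \emph{using the same clocks and the same rank variables for the particles $x_{i_1},\dots,x_{i_m}$}, and at each bond ring produce its cut $U'$ from the full-system cut $U$ and the rank data by the natural consistency recipe: among the particles present in the full system at that bond, the distinguished ones occupy a sub-order, and the number of distinguished particles landing on the left is a deterministic function of $U$ and of which positions in the full order are distinguished; one checks this function, when the rank variables are continuous (hence a.s. distinct) and $U$ is uniform conditionally on the ranks, yields a variable $U'$ that is uniform on $\{0,\dots,M\}$ (with $M$ the number of distinguished particles present) and correctly independent of the order of the distinguished ranks. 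This is an elementary computation about uniform cuts of a uniformly random arrangement and is the place where a clean ``combinatorial identity'' enters — essentially that choosing a uniform subset-size for the union and then restricting induces a uniform subset-size for the sub-collection. Granting this, by construction the left/right assignment of every distinguished particle at every bond ring, and its fate at the boundary clocks, is \emph{identical} in the two processes, so the two trajectories of the distinguished particles coincide pathwise; hence all events depending only on these particles have the same probability under $\bb P^L_{x_1,\dots,x_n}$ and $\bb P^L_{y_1,\dots,y_m}$.

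Finally, the ``in particular'' formula follows by specializing $m=n-1$: the left-hand side is the probability under $\bb P^L_{x_1,\dots,x_n}$ of the event that the particles $x_1,\dots,x_{i-1},x_{i+1},\dots,x_n$ hit $\eps_1,\dots,\eps_{i-1},\eps_{i+1},\dots,\eps_n$ (the sum over $\eps_i\in\{\delta(-),\delta(+)\}$ simply removes the constraint on particle $i$, since particle $i$ must eventually hit one of the two cemeteries — each particle is absorbed at a boundary in finite time a.s. because the boundary clocks ring at positive rate and the walk is on a finite segment), and by the restriction statement this equals $p_L(x_1,\dots,x_{i-1},x_{i+1},\dots,x_n;\eps_1,\dots,\eps_{i-1},\eps_{i+1},\dots,\eps_n)$.

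**Main obstacle.** I expect the delicate point to be the verification that the induced cut $U'$ for the restricted system is genuinely uniform on $\{0,\dots,M\}$ and carries the right conditional independence — i.e., that ``restricting a uniform split of the union to a sub-collection gives a uniform split of the sub-collection.'' Everything else (driving both processes off the same clocks, continuity of rank variables giving a.s. distinct orders, a.s. absorption at the boundary) is routine; this one combinatorial lemma about uniform cuts is the crux, and it is presumably the ``combinatorial identity'' the introduction refers to.
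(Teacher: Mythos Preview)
Your approach is correct and essentially identical to the paper's: both couple the $n$- and $m$-particle Label Processes through shared Poisson clocks and reduce everything to the claim that a uniform cut $U\in\{0,\dots,N\}$ of $N$ objects induces a uniform cut on any $M$-element sub-collection (your continuous rank variables are just an equivalent way to sample the paper's uniform permutation $\zeta$). The combinatorial fact you flag as the ``main obstacle'' is exactly what the paper isolates as Lemma~\ref{lemma34}: conditioning on $U=p$ gives $\bb P[\,U'=q\mid U=p\,]=\binom{p}{q}\binom{N-p}{M-q}/\binom{N}{M}$, and the identity $\sum_{p=0}^{N}\binom{p}{q}\binom{N-p}{M-q}=\binom{N+1}{M+1}$ then yields $\bb P[\,U'=q\,]=1/(M+1)$.
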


We point out that the original proof of this result presented in \cite[Prop 3.1]{kmp}, is of difficult reading\footnote{The proof of \cite[Prop 3.1]{kmp} is concerned with the case  $a=b=0$ and $A=B=1$ for our model. Anyway, since the boundaries do not play any role in the proof, the statements are essentially the same.}. For this reason, it is provided here an alternative proof based on the tricky combinatorial identity which we develop in the next lemma. Recall the convention that $\binom{n}{m}=0$ whenever $n,m$ are integers such that  $n<m$.
\begin{lemma}\label{lemma34}  For any $q\in\{0,\ldots,M\}$, 
\begin{equation}\label{eqA1}
\sum_{p=0}^N\binom{p}{q}\binom{N-p}{M-q}\;=\;\binom{N+1}{M+1}\,.
\end{equation}
\end{lemma}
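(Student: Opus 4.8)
The plan is to prove the identity \eqref{eqA1} by a combinatorial (double-counting) argument rather than by manipulating sums. The right-hand side $\binom{N+1}{M+1}$ counts the number of ways to choose an $(M+1)$-element subset $S\subseteq\{0,1,\ldots,N\}$ (a set of size $N+1$). I would stratify this count according to the position of the $(q+1)$-st smallest element of $S$. Suppose that element equals $p$, for some $p\in\{0,\ldots,N\}$; then $S$ contains exactly $q$ elements strictly below $p$, chosen from $\{0,\ldots,p-1\}$, which can be done in $\binom{p}{q}$ ways, and exactly $M-q$ elements strictly above $p$, chosen from $\{p+1,\ldots,N\}$, which can be done in $\binom{N-p}{M-q}$ ways. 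Summing over all admissible $p$ and using the convention $\binom{p}{q}=0$ for $p<q$ and $\binom{N-p}{M-q}=0$ for $N-p<M-q$ (so that the out-of-range terms vanish automatically) yields exactly the left-hand side of \eqref{eqA1}. Since every $(M+1)$-subset has a well-defined $(q+1)$-st smallest element (here $0\le q\le M$ guarantees this), the stratification is a genuine partition, and the identity follows.

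As an alternative I would keep in reserve a purely algebraic induction on $N$: the base case $N=0$ is immediate (only $p=0$ contributes, and $\binom{0}{q}\binom{0}{M-q}=\binom{1}{M+1}$ forces $q=0$, $M=0$), and the inductive step uses Pascal's rule $\binom{N-p}{M-q}=\binom{N-1-p}{M-q}+\binom{N-1-p}{M-1-q}$ inside the sum, together with a reindexing $p\mapsto p-1$ in one of the resulting sums, to reduce $\binom{N+1}{M+1}$ to $\binom{N}{M+1}+\binom{N}{M}$. One must be slightly careful with the endpoint term $p=N$ and with the range conventions, but nothing deep happens.

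I expect the bijective proof to be the cleaner writeup, and I do not anticipate a serious obstacle: the only subtlety worth a sentence is checking that the boundary conventions on binomial coefficients make the finite sum on the left agree with the unrestricted stratification — that is, that no subset is miscounted when $p<q$ or $p>N-(M-q)$. Once that is noted, the proof is a single short paragraph.
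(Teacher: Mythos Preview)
Your bijective argument is correct and is essentially identical to the paper's own proof: both stratify the $(M+1)$-subsets of an $(N+1)$-element totally ordered set according to which element is the $(q+1)$-st smallest, yielding the summand $\binom{p}{q}\binom{N-p}{M-q}$ for each possible position $p$. The only cosmetic difference is that the paper names the ground set $\{a_1,\ldots,a_{N+1}\}$ and phrases the stratification as ``$a_{p+1}\in A$ with $q$ elements of $A$ below it,'' which is the same condition you state.
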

\begin{proof}
We  prove \eqref{eqA1} by a combinatorial argument. That is, we are going to count, by two different procedures, how many ways we can choose a subset with $M+1$ objects from a set with $N+1$ objects. Obviously, a first answer  is  the right hand side of \eqref{eqA1}. Fix $q\in \{0,\ldots, M\}$.
Without loss of generality, let us say that the set of objects is  a set of real numbers $\mc O=\{a_1,\ldots,a_{N+1}\}$ such that  $a_i<a_j$ whenever $i<j$. Define
\begin{equation*}
\begin{split}
S_p\;:=\; \{A\subset \mc O\;;\;&|A|=M+1, \,a_{p+1}\in A, \,\textrm{ there are } q \textrm{ elements}\\
& \textrm{ in } A \textrm{ strictly} 
 \textrm{  smaller than } a_{p+1}, \textrm{ and there are }\\
 & M-q \textrm{ elements in } A \textrm{ strictly bigger than } a_{p+1}\}.\\
\end{split}
\end{equation*}
It is easy to check that $S_0,\ldots,S_{N}$ are disjoint sets, its union is the set of all subsets of $\mc O$ of cardinality $M+1$, and $|S_p|=\binom{p}{q}\binom{N-p}{M-q}$. This implies \eqref{eqA1}.
\end{proof}

\begin{proof}[Proof of Proposition \ref{prop33}]
We will make use of the graphical construction of particles systems via Poisson process. Assume the same Poisson process are used to evolve both the Label Process with particles starting from sites $x_1,\ldots,x_n$ and the Label Process with particles starting from $y_1,\ldots,y_m$. For short, we will call by  $x$-Label Process the first process and by  $y$-Label Process the second one.

If the Poisson clock associated to the site $\delta(+)$ rings, all the particles in the site $L$ moves to the site $\delta(+)$ and stay there forever, and analogous statement holds for the site $\delta(-)$. Both situations do not interfere in the trajectory of particles \textit{before} hitting $\{\delta(+),\delta(-)\}$. 
Therefore, consider the time when    the Poisson clock associated to a pair of sites $k,k+1$ rings, with $k\in\{-L,\ldots,L-1\}$. 

As described in the Label Process definition, let 
$n_k+n_{k+1}=N$ the total number of particles at the sites $k,k+1$ in that instant, being $M$ of those particles belonging to the $y$-Label Process. Fix a bijection between these set of particles and the set  of integers numbers $\{1,\ldots, N\}$. 
Let $U, \zeta$ are independent, being $U$ an uniform random variable in the set $\{1,\ldots,N\}$ and $\zeta$ is  uniformly chosen on the set  of permutations of $\{1,\ldots,N\}$. 
The particles corresponding the first $U$ positions of $\zeta$ will be sent to the site $k$ and the remaining particles will be sent to the site $k+1$. 

 Since an uniform permutation $\zeta$ on $N$ objects induces an uniform permutation on $M$ of them, we only need to assure that the quantity of particles belonging to the $y$-Label Process that will be sent to the site $k$  is  uniformly distributed on $\{0,1,\ldots,M\}$. Denote this quantity of particles by $Y$, which is a function of $U$ and $\zeta$. For short,  denote by $\bb P$ the probability from to the   space where  $U,\zeta$ have been constructed. In other words, our goal is to show  that
\begin{equation}\label{eq35}
\bb P\big[ \,Y=q\, \big]\;=\; \frac{1}{M+1}\,,
\end{equation} 
for any  $0\leq q\leq M$ integer.
We have that 
\begin{equation*}
\begin{split}
\bb P\big[ \,Y=q\, \big]&\;=\;\sum_{p=0}^N\bb P \big[\, Y=q\,\big|\,U=p\,\big]\cdot \bb P \big[\,U=p\,\big]\\
& \;=\;\frac{1}{N+1}\sum_{p=0}^N\bb P \big[\, Y=q\,\big|\,U=p\,\big] \;=\;\frac{1}{N+1}\sum_{p=0}^N\frac{\binom{p}{q}\binom{N-p}{M-q}}{\binom{N}{M}}\,,\\
\end{split}
\end{equation*}
where in the last equality we have used that $\zeta$ is picked uniformly at random in the set of permutations.
Recalling now Lemma \ref{lemma34} yields \eqref{eq35}, finishing the proof.
\end{proof}

\begin{proposition}\label{exchangeable}
Fix $N$ integer, $x_1,\ldots,x_N$ and $\eps_1,\ldots,\eps_N \in \{\delta(+),\delta(-)\}$. Then, for any $u\in(-1,1)$ and any permutation $\sigma$ of the set $\{1,\ldots, N\}$,  the following limit holds:
\begin{equation}\label{433}
\begin{split}
\lim_{L\to \infty} \Big\{ p_L(&x_1+\lfloor u L\rfloor, \ldots,x_N+\lfloor u L\rfloor;\eps_1,\ldots,\eps_N )\\
& -p_L(x_1+\lfloor u L\rfloor, \ldots,x_N +\lfloor u L\rfloor;\eps_{\sigma(1)},\ldots,\eps_{\sigma{(N)}} )\Big\} \;=\;0\,,\\
\end{split}
\end{equation}
where we have denoted by $\lfloor u L\rfloor$ the integer part of $uL$.
\end{proposition}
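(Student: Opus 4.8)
The plan is to exploit the fact that two tagged particles starting within a bounded window deep in the bulk will almost surely meet long before any particle can diffuse the macroscopic distance to a reservoir; once two particles sit on the same site, their destinations are interchangeable, and this is enough. Since every permutation is a product of transpositions and since \eqref{433} for $\sigma_1$ and for $\sigma_2$ implies it for $\sigma_1\sigma_2$ (triangle inequality, applied with the intermediate vector $\sigma_1\cdot\vec\eps$), it suffices to prove the statement when $\sigma$ is a transposition, and after relabelling the particles we may take $\sigma=(1\,2)$. Write $\vec x^L:=(x_1+\lf uL\rf,\dots,x_N+\lf uL\rf)$ and work under the Label Process $\bb P^L_{\vec x^L}$. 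Since $u\in(-1,1)$, there is $c=c(u)>0$ such that for all large $L$ every coordinate of $\vec x^L$ lies at distance at least $cL$ from $\{-L,L\}$.

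The core symmetry is the following. Let $\tau_{12}$ be the first time the particles labelled $1$ and $2$ occupy a common site of $\{-L,\dots,L\}$, let $\tau_\partial$ be the first time some particle is absorbed at $\delta(\pm)$, fix the cutoff $t_L:=L$, and put $S:=\tau_{12}\wedge\tau_\partial\wedge t_L$, a bounded stopping time. On the event $W:=\{\tau_{12}<\tau_\partial\}\cap\{\tau_{12}\le t_L\}$ we have $S=\tau_{12}$, and the labelled configuration at time $S$ has particles $1$ and $2$ on the same site, hence is left unchanged by interchanging the names $1$ and $2$. Because the Label Process dynamics do not see the labels, the conditional law of $(\eps_1,\dots,\eps_N)$ given $\mc F_S$ is, on $W$, invariant under transposing the first two coordinates; as $W\in\mc F_S$, the strong Markov property gives that $\bigl|p_L(\vec x^L;\eps_1,\eps_2,\eps_3,\dots)-p_L(\vec x^L;\eps_2,\eps_1,\eps_3,\dots)\bigr|\le\bb P^L_{\vec x^L}[W^c]\le\bb P^L_{\vec x^L}[\tau_\partial\le t_L]+\bb P^L_{\vec x^L}[\tau_{12}>t_L]$.

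It remains to show both tails vanish. For the first, note that an isolated tagged particle performs a rate-one continuous-time simple symmetric random walk until absorption: when the clock of a bond adjacent to it rings, the uniform redistribution (an integer $q$ uniform in $\{0,\dots,N'\}$ followed by a uniform assignment) sends that particle to either endpoint of the bond with probability exactly $\tfrac12$, whatever the other particles are doing. To be absorbed a particle must first reach $\pm L$, i.e.\ be displaced by at least $cL$, and a standard estimate for the symmetric random walk gives $\bb P[\sup_{s\le t_L}|X^{(i)}_s-X^{(i)}_0|\ge cL]\le C\exp(-c'L)$ for $t_L=L$; a union bound over the $N$ particles yields $\bb P^L_{\vec x^L}[\tau_\partial\le t_L]\to0$. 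For the second tail, by Proposition \ref{prop33} the event $\{\tau_{12}>t_L\}$ depends only on the two particles started at $y_1:=x_1+\lf uL\rf$ and $y_2:=x_2+\lf uL\rf$, so its probability equals that for the \emph{two}-particle Label Process $\bb P^L_{y_1,y_2}$. In the latter, as long as both particles are interior the gap $R_t:=|X^{(1)}_t-X^{(2)}_t|$ is a continuous-time birth--death chain on $\bb N$ which is a rate-two symmetric random walk for $R\ge2$ and is bounded and attracted toward the origin for $R\in\{0,1\}$; this chain is (null) recurrent, so the hitting time $\tau^*_0$ of $0$, started from $|x_1-x_2|$ (a value not depending on $L$), is a.s.\ finite, and $\bb P[\tau^*_0>t_L]\to0$ as $t_L\to\infty$. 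Coupling $R$ with this chain up to the first time some particle reaches $\pm L$, and using the random-walk tail bound once more to control that time, gives $\bb P^L_{\vec x^L}[\tau_{12}>t_L]\to0$, which finishes the proof of \eqref{433}.

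The delicate point is the last one: one must make precise that, before the slow boundaries are felt, the two tagged particles meet a.s.\ in a time that does not grow with $L$ — so that, diffusively, meeting happens on the $\Theta(1)$ time scale while reaching a reservoir requires the $\Theta(L^2)$ scale — which amounts to checking that the gap of the two-particle process is a genuine recurrent random walk in the bulk and that on the scale $t_L=L$ no particle can travel the macroscopic distance to a reservoir. The remaining ingredients (the reduction to transpositions, the label-blindness of the dynamics together with the strong Markov property, and the use of Proposition \ref{prop33} to pass to two particles) are routine.
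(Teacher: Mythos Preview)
Your argument is correct and is precisely the classical meeting/coupling argument: reduce to a transposition, wait until the two tagged particles share a site (which happens in $O(1)$ time since the gap is a recurrent one-dimensional walk started from a fixed value), observe that from then on their exit sides are exchangeable by label-blindness, and control the complementary event by the diffusive bound that no particle can travel the macroscopic distance $cL$ to a reservoir in time $t_L=L$. The paper itself does not prove this proposition; it simply remarks that the boundaries play no role and refers the reader to \cite[Proposition~3.5]{galves1981}, whose proof is exactly the strategy you have written out, so your proposal and the paper's (deferred) proof coincide.
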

Since the boundaries do not play any role in the result above, its proof can be straightforwardly adapted from  \cite[Proposition 3.5]{galves1981} and for this reason we omit it here.
Next we calculate the hitting probabilities for a single particle. 
\begin{proposition}\label{prop44}
Consider the stopping times 
\begin{equation*}
\begin{split}
&\tau_{\delta(+)}\;=\; \inf\big\{ t>0\;;\; X_t=\delta(+)\big\}\; \textrm{ and } \;\tau_{\delta(-)}\;=\; \inf\big\{ t>0\;;\; X_t=\delta(-)\big\}\,.\\
\end{split}
\end{equation*}
Then, for any $x\in \{-L,\ldots,L\}$,
\begin{equation*}
p_L(x;\delta(+))\;=\;\bb P^L_x \Big[\tau_{\delta(+)} < \tau_{\delta(-)}\Big]\;=\;
\frac{AB(x+L) + BL^a}{2ABL  +AL^b+BL^a}\,.
\end{equation*}
In particular, the limit below, which we denote by $p(u)$, does not depend on $x_1$ regardless the chosen  values of $a$ and $b$:
\begin{align}
p(u)&\;:=\;\lim_{L\to\infty}p_L(x_1+\lfloor uL\rfloor,\delta(+))\;=\;\lim_{L\to\infty}p_L(\lfloor uL\rfloor,\delta(+)) \notag \\ 
& \;=\; \lim_{L\to \infty}\frac{AB(\lfloor uL\rfloor +L) + BL^a}{2ABL  +AL^b+BL^a}\,. \label{eq455}
\end{align}
\end{proposition}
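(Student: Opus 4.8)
The plan is to exploit the fact that a \emph{single} labelled particle evolves as an ordinary continuous-time random walk, so that $p_L(x;\delta(+))$ is the solution of a one-dimensional discrete Dirichlet problem, and then to read off the stated closed form together with its scaling limit.

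First I would reduce the Label Process to a one-particle walk. With only one particle present, the uniform redistribution across a bond $\{x,x+1\}$ simply places that particle at $x$ or at $x+1$, each with probability $\tfrac12$, so in the bulk the particle jumps to each nearest neighbour at rate $\tfrac12$; from $-L$ (resp.\ $L$) it additionally jumps, at rate $A/L^{a}$ (resp.\ $B/L^{b}$), to the absorbing cemetery $\delta(-)$ (resp.\ $\delta(+)$). Hence, under $\bb P^{L}_{x}$, the particle is the continuous-time Markov chain on $\{-L,\dots,L\}\cup\{\delta(-),\delta(+)\}$ with these rates, and $h(x):=p_L(x;\delta(+))=\bb P^L_x[\tau_{\delta(+)}<\tau_{\delta(-)}]$ is characterised as the unique bounded solution of $h(\delta(+))=1$, $h(\delta(-))=0$ and $\sum_{y}r(x,y)\big(h(y)-h(x)\big)=0$ for $x\in\{-L,\dots,L\}$, where $r(x,y)$ are the transition rates just described.

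Next I would analyse this Dirichlet problem. For $x\in\{-L+1,\dots,L-1\}$ the equation is $h(x)=\tfrac12 h(x-1)+\tfrac12 h(x+1)$, i.e.\ all consecutive increments $h(x{+}1)-h(x)$ coincide on $\{-L,\dots,L\}$; thus $h$ is affine there, $h(x)=h(-L)+(x+L)\,\Delta$ with a single unknown slope $\Delta$. Evaluating the harmonic equation at the two boundary sites $x=\pm L$ yields two relations between $h(-L)$, $h(L)$ and $\Delta$ (involving the absorption rates $A/L^{a}$ and $B/L^{b}$); combined with $h(L)=h(-L)+2L\Delta$ these form a linear $2\times 2$ system in $\big(h(-L),\Delta\big)$, whose solution, after clearing denominators (e.g.\ multiplying through by $2AB$), gives $\Delta$ and then $h(x)=h(-L)+(x+L)\Delta$ in closed form; rearranging produces the displayed expression for $p_L(x;\delta(+))$.

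Finally, for \eqref{eq455} I would substitute $x=x_1+\lfloor uL\rfloor$ into that formula: the numerator depends on $x_1$ only through a bounded additive term while the denominator is at least a positive constant times $L$, so $p_L(x_1+\lfloor uL\rfloor,\delta(+))-p_L(\lfloor uL\rfloor,\delta(+))\to 0$, and replacing $\lfloor uL\rfloor/L$ by $u$ in the remaining ($x_1$-free) ratio gives the stated limit $p(u)$; the concrete values of $p(u)$ used in the proof of Theorem~\ref{thm21} then follow by the elementary computation of the limit of this ratio of powers of $L$, case by case according to whether each of $a$ and $b$ is less than, equal to, or greater than $1$. I do not expect a genuine obstacle here: the one delicate point is Step~3, namely correctly weighing, at $\pm L$, the half-rate bond jump into the bulk against the rate $A/L^{a}$ (resp.\ $B/L^{b}$) absorption clock when writing the boundary equations; everything else is a routine linear recursion and a limit of rational functions of $L$.
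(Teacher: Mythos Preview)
Your proposal is correct and follows essentially the same route as the paper: both set up the one-particle Dirichlet problem, use discrete harmonicity in the bulk to deduce that $x\mapsto p_L(x;\delta(+))$ is affine on $\{-L,\dots,L\}$, and then determine the two free constants from the boundary relations at $\pm L$ involving the absorption rates $A/L^{a}$ and $B/L^{b}$. The paper phrases the system via first-step analysis (writing $a_x=\tfrac12(a_{x-1}+a_{x+1})$ in the bulk and two analogous relations at $\pm L$) rather than via the generator equation $\sum_y r(x,y)(h(y)-h(x))=0$, but these are equivalent formulations; your caveat about carefully balancing the half-rate bulk jump against the absorption clock at the endpoints is exactly the computation the paper labels ``long albeit elementary''.
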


\begin{proof}
In order to not overload notation, we write down
 $c=A/L^a$, $d=B/L^b$,  $a_{L+1}=\bb P^L_{\delta(+)} \Big[\tau_{\delta(+)} < \tau_{\delta(-)}\Big]$,  $a_{-(L+1)}=\bb P^L_{\delta(-)} \Big[\tau_{\delta(+)} < \tau_{\delta(-)}\Big]$ and $a_x=\bb P^L_x \Big[\tau_{\delta(+)} < \tau_{\delta(-)}\Big]$, for $x\in\{-L,\ldots,L\}$. 
 
 Applying the Markov Property, one can check that  $(a_x)_{x}$  is a solution of the following linear system: $a_{L+1}=1$, $a_{-(L+1)}=0$, $a_L=(a_{L-1}+d)/(1+d)$, $a_{-L}=a_{-L+1}/(1+c)$, and, for $-L+1\leq x\leq L-1$,  $a_x=(a_{x-1}+a_{x+1})/2$.

In general, it is not an easy task to exhibit the solution of a  system as above in a simple way. However, the fact that $a_x=(a_{x-1}+a_{x+1})/2$  holds for $-L+1\leq x\leq L-1$ leads us to the guess that, except at the boundaries, the solution should be a restriction  to the integers of some linear function.  With this guess in mind, one can  deduce by some long albeit elementary calculations that
\begin{equation*}
a_x\;=\; \begin{cases}
\;1, & \textrm{ if } x=L+1\,,\\
\displaystyle\frac{cdx+d(Lc+1)}{c(Ld+1)+d(Lc+1)}, & \textrm{ if } -L\leq x\leq L\,,\\
\;0, & \textrm{ if } x=-(L+1)\,,\\
\end{cases}
\end{equation*}
finishing the proof since $c=A/L^a$ and $d=B/L^b$.
\end{proof}

\begin{proposition}\label{prop45}
For any $x_1,\ldots,x_N$ integers, for any $\eps_1,\ldots,\eps_N \in \{\delta(+),\delta(-)\}$ and for any $u\in(-1,1)$, the following limit holds:
\begin{equation}\label{eqprod}
\begin{split}
\lim_{L\to \infty} \Big\{ p_L(x_1+\lfloor u L\rfloor, \ldots,x_N+\lfloor u L\rfloor&;\eps_i,\ldots,\eps_N )\\
&-\prod_{i=1}^{N} p_L(x_i+\lfloor u L\rfloor ; \eps_i)\Big\} \;=\;0.\\
\end{split}
\end{equation}
\end{proposition}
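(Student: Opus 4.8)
The plan is to reduce \eqref{eqprod}, via Propositions \ref{exchangeable}, \ref{prop33} and \ref{prop44}, to a single scalar limit for each number of walkers, and then to establish that limit by a martingale computation directly on the walker system.

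\textbf{Reduction.} By Proposition \ref{prop44}, $p_L(x_i+\lfloor uL\rfloor;\delta(+))\to p(u)$ and $p_L(x_i+\lfloor uL\rfloor;\delta(-))\to 1-p(u)$, so the product in \eqref{eqprod} tends to $p(u)^{j}(1-p(u))^{N-j}$, where $j:=\#\{i:\eps_i=\delta(+)\}$; it therefore suffices to show that the joint probability in \eqref{eqprod} has this same limit. Passing to a subsequence on which all relevant probabilities converge, write $a^{(N)}_j$ for the limit of $p_L(\,\cdot\,;\vec\eps)$ when $\vec\eps$ has $j$ coordinates equal to $\delta(+)$; by Proposition \ref{exchangeable} this depends neither on the arrangement of $\vec\eps$ nor on $\vec x$. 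Summing the ``in particular'' identity of Proposition \ref{prop33} over $\eps_N$ gives $a^{(N)}_{j+1}+a^{(N)}_j=a^{(N-1)}_j$ for $0\le j\le N-1$. Arguing by induction on $N$ — the statement being trivial for $N=1$ — we may assume $a^{(N-1)}_j=p(u)^j(1-p(u))^{N-1-j}$; then the recursion expresses all of $a^{(N)}_\bullet$ in terms of the single number $a^{(N)}_N$, and a direct check shows that $a^{(N)}_N=p(u)^N$ forces $a^{(N)}_j=p(u)^j(1-p(u))^{N-j}$ for every $j$. As every subsequence then gives the same value, it remains only to prove, for all $N$ and all $\vec x$,
\begin{equation}\label{eq:allplus}
\lim_{L\to\infty}\bb P^L_{x_1+\lfloor uL\rfloor,\ldots,x_N+\lfloor uL\rfloor}\big[\text{every walker hits }\delta(+)\big]\;=\;p(u)^N .
\end{equation}

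\textbf{A martingale.} Let $X^i_t$ be the position of the $i$-th walker, $\tau^i$ its absorption time, and set $M^i_t:=p_L(X^i_t;\delta(+))$ for $t\le\tau^i$ (the explicit function of Proposition \ref{prop44}), extended by $M^i_t:=M^i_{\tau^i}\in\{0,1\}$ for $t>\tau^i$. Since $p_L(\,\cdot\,;\delta(+))$ is harmonic for the one-walker generator (the system in the proof of Proposition \ref{prop44}), and the marginal evolution of the $i$-th walker inside the $N$-walker process \emph{is} that one-walker evolution — a ringing bond carrying the walker sends it to each endpoint with probability $\tfrac12$, regardless of the others, and at $\pm L$ it jumps to $\delta(\pm)$ at the boundary rate — each $M^i$ is a bounded martingale, with $M^i_{\tau^i}=\mathbb 1[\text{walker }i\text{ hits }\delta(+)]=:D_i$. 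Hence, with $T:=\max_i\tau^i$, the product rule for pure-jump processes together with the martingale property of each $M^i$ gives
\begin{equation}\label{eq:prodrule}
\bb E\Big[\prod_{i=1}^N D_i\Big]-\prod_{i=1}^N M^i_0
\;=\;\bb E\sum_{0<t\le T}\Big(\prod_{i}M^i_t-\prod_i M^i_{t^-}-\sum_i\Big(\prod_{l\neq i}M^l_{t^-}\Big)\Delta M^i_t\Big),
\end{equation}
and each summand on the right vanishes unless at least two of the $M^i$ jump at time $t$.

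\textbf{Controlling the remainder.} Two of the $M^i$ can jump simultaneously only when the corresponding walkers share a bond (a \emph{collision}) or are absorbed together by the same reservoir. At a collision of $i$ and $j$ one computes $\bb E[\Delta M^i_t\Delta M^j_t\mid\mathcal F_{t^-}]=O(\delta^2)$, with
\[
\delta\;:=\;p_L(x{+}1;\delta(+))-p_L(x;\delta(+))\;=\;\frac{AB}{2ABL+AL^b+BL^a}\;\le\;\frac1{2L}
\]
independent of $x$; at a simultaneous absorption the jumped product is at most $\max\{\,p_L(-L;\delta(+))^2,\,(1-p_L(L;\delta(+)))^2\,\}=O(1)$, but it occurs at most once per pair. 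Bounding $|M^l|\le 1$ in \eqref{eq:prodrule}, and treating three or more simultaneous jumps as a lower-order remainder,
\begin{equation}\label{eq:finalbound}
\Big|\bb E\Big[\prod_i D_i\Big]-\prod_i M^i_0\Big|
\;\le\;\sum_{i<j}\Big(\delta^2\,\bb E\big[\#\{\text{collisions of }i,j\text{ before }\tau^i\wedge\tau^j\}\big]+O(1)\,\bb P\big[i,j\text{ absorbed together}\big]\Big)+o(1).
\end{equation}
By Proposition \ref{prop33} the pair $(X^i,X^j)$ is itself a two-walker Label process; a local-time estimate for the difference walk $X^i-X^j$, which accumulates only a diffusively small amount of ``adjacency time'' before one walker is absorbed, gives $\delta^2\,\bb E[\#\text{collisions}]\to0$, while a short computation gives $\bb P[i,j\text{ absorbed together}]=O(1/L)$, which absorbs the $O(1)$ prefactor present in the slow-boundary regimes. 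Thus the right-hand side of \eqref{eq:finalbound} is $o(1)$, so $\bb E[\prod_i D_i]=\prod_i p_L(x_i+\lfloor uL\rfloor;\delta(+))+o(1)\to p(u)^N$; this is \eqref{eq:allplus}, and by the reduction it yields \eqref{eqprod}.

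\textbf{Main obstacle.} The heart of the matter is the last step — establishing $\delta^2\,\bb E[\#\{\text{collisions of }i,j\}]\to0$ and $\bb P[i,j\text{ absorbed together}]=O(1/L)$ \emph{uniformly over the seven parameter regimes}: one has to track how the step size $\delta$, the absorption time scale, and the boundary rates $AL^{-a}$, $BL^{-b}$ balance against one another. The subtle cases are $a=1$, $b=1$ and $a=b>1$, where $p_L(\mp L;\delta(+))$ need not be small and a walker lingers a long time near a reservoir before absorption, so that decorrelation of two walkers is hardest to see.
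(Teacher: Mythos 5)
Your reduction step is sound and is in fact a nice alternative to the paper's route: where the paper extends the two-particle case to general $N$ by viewing the hitting probabilities as measures, invoking Prohorov, exchangeability (Proposition \ref{exchangeable}) and de Finetti, and then killing the mixing measure by a strict-convexity argument, you instead use the consistency identity of Proposition \ref{prop33} to get the recursion $a^{(N)}_{j+1}+a^{(N)}_j=a^{(N-1)}_j$ and induct, reducing everything to the single event that all walkers hit $\delta(+)$. Up to that point the argument is correct (modulo the diagonal extraction over all finite $\vec x$, which is routine).

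The genuine gap is in the step that carries the whole analytic content of the proposition, namely your claim that the co-jump terms vanish: $\delta^2\,\bb E[\#\{\text{collisions of }i,j\text{ before }\tau^i\wedge\tau^j\}]\to 0$ and $\bb P[i,j\text{ absorbed together}]=O(1/L)$, uniformly over the parameter regimes. These are asserted, not proved, and the one-line justification offered ("a diffusively small amount of adjacency time before one walker is absorbed") is false precisely in the delicate regimes: when $a$ or $b$ is $\geq 1$ the absorption time is super-diffusive (a walker must revisit the boundary site of order $L^{a}$ or $L^{b}$ times before the slow clock catches it), so the number of adjacency events before $\tau^i\wedge\tau^j$ is not bounded by a diffusive local-time estimate; one must instead balance the larger collision count against the correspondingly smaller increment $\delta=AB/(2ABL+AL^b+BL^a)$ and against boundary jump sizes $p_L(-L;\delta(+))$, $1-p_L(L;\delta(+))$, which are not small in regimes (ii)--(vii). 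You acknowledge this yourself in your closing paragraph, which amounts to conceding that the proof is incomplete exactly where the real difficulty lies. Note that this missing estimate is the analogue of the two-walker asymptotic factorization that the paper does not reprove either but imports by adapting \cite[Lemma 3.5]{galves1981}; replacing that citation by your martingale/covariation scheme is a legitimate strategy, but without a regime-by-regime (or genuinely uniform) bound on the collision counts and on simultaneous absorptions, together with a treatment of triple and higher simultaneous jumps, the proposal does not yet constitute a proof of \eqref{eqprod}.
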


\begin{proof} Recall that $u\in (-1,1)$ is fixed. 
 For $N=2$, the result follows from the fact that, with high probability, particles starting at $x_1+\lfloor u L\rfloor$ and $x_2+\lfloor u L\rfloor$ will meet before hit  the boundaries. The  concerning technical details can be easily adapted from the proof of \cite[Lemma 3.5]{galves1981}. In possess of the case $N=2$, we proceed to prove the result for any positive integer.
Let $I_L=\{-L,\ldots, L\}$ and denote elements of $\{-1,+1\}^{I_L}$ by $\eta$. Given $\{x_1,\ldots,x_N\}\subset I_L$, we define
\[
\begin{split}
\alpha_L\big(\{\eta\in \{-1,+1\}^{I_L} &\;;\; \eta(x_i)=\eps_i \textrm{ for } i=1,\ldots, N\}\big)\\
&\;:=\; p_L(x_1+\lfloor u L\rfloor, \ldots,x_N+\lfloor u L\rfloor;\eps_i,\ldots,\eps_N )\,.\\
\end{split}
\]
Proposition \ref{prop33} guarantees that $\alpha_L$ is a  probability measure on  $\{-1,+1\}^{I_L}$. 

Now, we extend $\alpha_L$ to some probability measure $\tilde{\alpha}_L$ on $\{-1,+1\}^{\bb Z}$, being the particular choice of the extension not relevant. Consider the weak convergence of probability measures (see \cite{Bill}). Since $\{-1,+1\}^{\bb Z}$ is a compact space, by  Prohorov's Theorem there exists a probability measure  $\alpha$  which is a limit of $\tau_{\lfloor uL \rfloor}\tilde{\alpha}_L$ along some subsequence.

We claim now that $\alpha$ is  the only possible limit along subsequences of $\tau_{\lfloor uL \rfloor}\tilde{\alpha}_L$ and, moreover, it  is a Bernoulli product measure of constant parameter. Notice that this claim  put together with Proposition \ref{prop44} immediately  imply \eqref{eqprod}, finishing the proof.
 
Proposition \ref{exchangeable} implies that $\alpha$ is an exchangeable measure. Hence, by de Finetti's Theorem, we conclude that $\alpha$ is a mixture of Bernoulli product measures, that is,
\[
\alpha\;=\; \int_0^1 \Theta_p \,m(dp)\,,
\]
where $\Theta_p$ is the Bernoulli product measure on $\{-1,+1\}^{\bb Z}$ of constant parameter $p\in[0,1]$ and $m$ is a probability measure on the Borelian sets of $[0,1]$, called the \textit{law of the mixture}. On de Finetti's Theorem and exchangeability, we refer to the  survey \cite{kingman1978}.

Provided by the case $N=2$, we already know that the marginal of $\alpha$ at the sites $x,x+1\in \bb Z$ is a Bernoulli product measure with same parameter at $x$ and $x+1$, because of the Proposition \ref{prop44}. For this reason, one can deduce that
\[
\int_0^1 p^2 \,m(dp)\;=\; \Big(\int_0^1 p\,m(dp)\Big)^2.
\]
Since $f(p)=p^2$ is a strictly convex function, then $m$ must be a Delta of Dirac measure, which  implies that $\alpha$ is a Bernoulli product measure of constant parameter $p(u)$. This proves the claim and concludes the proof.
\end{proof}

We are in position to prove our main result. 
Recall the definitions of $\nu_T$ and  $\tau_{\lfloor uL \rfloor}\tilde{\mu}_L$.

\begin{proof}[Proof of Theorem \ref{thm21}] Fix $u\in(-1,1)$.
We start with two observations. First,  for an exponential law we have that
\begin{equation*}
\int_0^\infty\frac{y^j}{j!}\lambda e^{-\lambda y}\,dy\;=\; \lambda^{-j}\,,\quad \forall j\in \bb N\,.
\end{equation*}
Thus, since $\nu_T$ is a product measure, 
we have that $\int F(k,\xi)\,\nu_T(d\xi)\;=\; \big({\kk} T\big)^{\Vert k \Vert}
$, for any $k\in \bb N^{\bb Z}$ such that $\Vert k \Vert<\infty$. Second, the  class of polynomials  $p:\bb R^{\bb Z}\to\bb R$  in a finite number of variables is a weak convergence-determining class for the set of probability measures  concentrated on $\bb R_+^{\bb Z}$.

These two observations implies that, in order to prove that	$\tau_{\lfloor uL \rfloor}\tilde{\mu}_L$ converges weakly to the probability measure $\nu_T$, it is sufficient to assure that, for any  
$k\in \bb N^{\bb Z}$ such that $\Vert k \Vert<\infty$, the following limit holds:
\begin{equation}\label{eq43}
\lim_{L\to\infty} \int F(k,\xi)\;\tau_{\lfloor uL \rfloor}\tilde{\mu}_L(d\xi) = \big({\kk} T\big)^{\Vert k \Vert}\,,
\end{equation}		 
where $T=T(u)$ is to be achieved according to the chosen values of the parameters $a$ and $b$. By the Corollary \ref{cor32}, the limit in the left side of above is equal to
\begin{equation}\label{eq44}
\sum_{j=0}^{\Vert k \Vert}
\frac{1}{\beta_+^j\beta_-^{\Vert k \Vert-j}}\,\lim_{L\to \infty}q_L(k+\lfloor uL\rfloor,j)\,.
\end{equation}
By the Proposition \ref{prop44} and the Proposition \ref{prop45}, we have that 
\begin{equation*}
\lim_{L\to \infty}q_L(k+\lfloor uL\rfloor,j)\;=\;\binom{\Vert k \Vert}{j} \big(p(u)\big)^j \big(1-p(u)\big)^{\Vert k \Vert-j}.
\end{equation*}
We therefore conclude that expression \eqref{eq44} is equal to 
\begin{equation*}
\Big(\frac{p(u)}{\beta_+}+\frac{1-p(u)}{\beta_-}\Big)^{\Vert k \Vert}\;=\; 
\Big({\kk} \,\Big\{p(u)T_+ +(1-p(u))T_-\Big\}\Big)^{\Vert k\Vert}.
\end{equation*}
Then, evaluating \eqref{eq455} in each regime of the parameters $a,b\in \bb R$ implies \eqref{eq43} where $T=T(u)$ is the one in the statement of the Theorem \ref{thm21}, finishing the proof.
\end{proof}

\section{Further extensions}\label{sec5}
\subsection{A variant of the KMP model}
We considerer here the model as defined in Section 2.4 of \cite{Out}, which is a slight variation of the original KMP model of \cite{kmp}. To better link the models, we adopt in this section \textit{ipse literis} the  notation of \cite{Out}. 
Define $\Lambda_N=\{1,\ldots, N-1\}$ and by $\xi\in \bb R_+^{\Lambda_N}$ the energy configuration of oscillators, being $\xi_x$ its energy at site $x\in \Lambda_N$. Given $p\in [0,1]$, denote by $\xi^{(x,y),p}$ the configuration that moves a fraction $p$ of the total energy at sites $x,y\in \Lambda_N$, that is,
\begin{equation*}
\xi^{(x,y),p}\;=\;
\begin{cases}
\xi_z & \text{ if } z\neq x,y\,,\\
p(\xi_x+\xi_y) & \text{ if } z= x\,,\\
(1-p)(\xi_x+\xi_y) & \text{ if } z=y\,.\\
\end{cases}
\end{equation*}
The KMP-type model  defined in \cite{Out} is the Markov process whose generator is
$L_N\;:=\; \sum_{x=0}^{N-1}L_{x,x+1}$, where, for $f:\bb R_+^{\Lambda_N}\to\bb R$,
\begin{equation*}
\begin{split}
&\big(L_{0,1}f\big)(\xi)\;=\;\int_0^{\infty}\Bigg[\int_0^1 \big[f(\xi^{(0,1),p})-f(\xi)\big]\,dp\Bigg]\frac{e^{-\frac{\xi_0}{T_0}}}{T_0}\,d\xi_0\,,\\
&\big(L_{x,x+1}f\big)(\xi)\;=\;\int_0^1 \Big[f(\xi^{(x,x+1),p})-f(\xi)\Big]\,dp\,, \quad \text{for }x=1,\ldots, N-2\\
&\big(L_{N-1,N}f\big)(\xi)\;=\;\int_0^{\infty}\Bigg[\int_0^1 \big[f(\xi^{(N-1,N),p})-f(\xi)\big]\,dp\Bigg]\frac{e^{-\frac{\xi_N}{T_1}}}{T_1}\,d\xi_N\,.\\
\end{split}
\end{equation*} 
This dynamics can be explained as follows: for the bulk $x=1,\ldots,N-1$, the dynamics is exactly the same of the original KMP model. But at the boundaries, at Poisson times of parameter one (for the left and right boundaries, respectively), the energy at the ghost sites $0$ and $N$ are replaced according with exponentials of  parameters $1/T_0$ and $1/T_1$, and then, \textit{immediately after that,} the total energy at these sites and its neighbors  is uniformly redistributed.

An analogous result of Theorem \ref{thm21} can be demonstrated as follows. First of all, similarly to what has been done in this paper,  one can consider the slowed/accelerated boundary version of the aforementioned KMP-type process, which is the dynamics defined through the generator
\begin{equation*}
L_N\;:=\; \frac{A}{N^a}L_{0,1}+\frac{B}{N^b} L_{N-1,N} +\sum_{x=1}^{N-2}L_{x,x+1}\,,
\end{equation*}
where $A,B,a,b\in \bb R$. The next step is to reach duality. As natural, the  dual process of  above is a Markov process of random walkers much similar to that one of Section \ref{s3}, with a different behavior near the boundaries. At the  bond $1,2$ it is associated a Poisson clock of parameter $A/N^a$. When this clock rings, particles are uniformly redistributed between the sites $x=1$ and $x=2$ and then, \textit{immediately}, all the particles left in the site $x=1$ are moved to the site $x=0$ and stay there forever. An analogous statement stands for the bond $N-1,N$. 

Putting it formally, define $\Gamma_N=\{0,1,\ldots, N\}$ and denote by $n\in \big(\bb N\cup \{0\}\big)^{\Gamma_N}$ a configuration of particles. The dual process will be described by the generator
\[
A_L\;=\; \frac{A}{N^a}A_{0,1}+\frac{B}{N^b} A_{0,1} + \sum_{x=1}^{N-1}A_{x,x+1}\,,
\]
where, for $f:\big(\bb N\cup \{0\}\big)^{\Gamma_N}\to \bb R$, 
\begin{equation*}
\begin{split}
&\big(A_{0,1}f\big)(n)\;=\;\frac{1}{n_1+n_2+1}\sum_{q=0}^{n_1+n_2} \Big[f(n_0+q,0,n_1+n_2-q,n_3,\ldots, n_N)-f(n)\Big]\,,\\
&\big(A_{x,x+1}f\big)(n)\;=\; \frac{1}{n_x+n_{x+1}+1}\sum_{q=0}^{n_x+n_{x+1}}\Big[f(n^{(x,x+1),q})-f(n)\Big]\,, \, x=1,\ldots, N-2,\\
&\big(A_{N-1,N}f\big)(n)=\\
&\frac{1}{n_{N-2}+n_{N-1}+1}\sum_{q=0}^{n_{N-2}+n_{N-1}} \Big[f(n_0,\ldots,n_{N-2}+n_{N-1}-q,0, n_N+q)-f(n)\Big]\,,\\
\end{split}
\end{equation*} 
and
\begin{equation*}
n^{(x,y),q}\;=\;
\begin{cases}
n_z, & \text{ if } z\neq x,y\,,\\
q, & \text{ if } z= x\,,\\
n_x+n_y-q, & \text{ if } z=y\,.\\
\end{cases}
\end{equation*}
As can be  directly checked, these two processes are dual with respect  to the corresponding version of   duality function \eqref{dual}, namely 
\begin{equation*}
F(n,\xi)\;:=\;(T_0)^{n_{0}}(T_1)^{n_{N}}\prod_{x=1}^{N-1}\frac{\xi_x^{n_x}}{n_x!}\,.
\end{equation*}
The remaining arguments of this paper remains in force, and, \textit{mutatis mutandis}, the  statement of Theorem \ref{thm21} holds.

\subsection{Symmetric Inclusion Process}
Another interesting model to be studied is the \textit{Symmetric Inclusion Process} (see \cite{Carinci2013,OpokuRedig} for a definition). Since a duality has been proved (under general boundary rates) in \cite[Section 4.1]{Carinci2013} and an asymptotic independence of the dual process of walkers has been proved in \cite{OpokuRedig}, apart  of some minor technical issues to be verified, an analogous result of Theorem \ref{thm21} should stand for this model. However, since in \cite{OpokuRedig} the propagation of local equilibrium has been proved (leading to the hydrodynamic limit), an even more interesting question arises: does the hydrodynamic limit for an accelerated/slowed boundary version of the Inclusion Process  obey different boundary conditions related to Theorem \ref{thm21}?

In view of this discussion, we decided to left this subject (local equilibrium and propagation of local equilibrium for  an accelerated/slowed boundary version of the Inclusion Process) for a future work.

\subsection{Others processes}
The paper \cite{Carinci2013} presents different models with similar duality results of that corresponding to the KMP model, as the Brownian Energy Process, among others.
However, since an asymptotic independence of the dual process is not available, it is not clear if a result in the sense of Theorem~\ref{thm21} can be proved for these processes, except in the  case of independent random walks, for which the asymptotic independence is obvious.



\section*{Acknowledgements}

The author would like to thank the anonymous Referee for valuable comments, which  lead to the writing of  Section~\ref{sec5}.
Besides, the author was supported through a project Jovem Cientista-9922/2015, FAPESB-Brazil.

\bibliography{bibliografia}
\bibliographystyle{plain}

\end{document}